\documentclass[12pt]{amsart}
\usepackage{latexsym}
\usepackage{amsfonts}
\usepackage{amsthm}
\usepackage{amsmath}
\usepackage{amssymb}

\def\<{{\langle}}
\def\>{{\rangle}}

\def\note#1{{}}

\def\note#1{}

\def\beq{\begin{equation}}
\def\eeq{\end{equation}}

\newcounter{zlist}

\def\Label#1{\label{#1}\ifmmode\llap{[#1] }\else
\marginpar{\smash{\hbox{\tiny [#1]}}}\fi}
\def\Label{\label}

\newtheorem{proposition}{Proposition}[section]

\newtheorem{theorem}[proposition]{Theorem}

\theoremstyle{definition}
\newtheorem{definition}[proposition]{Definition}

\theoremstyle{remark}
\newtheorem{remark}[proposition]{Remark}

\newcounter{c}

\newcommand{\etyk}[1]{\vspace{-7.4mm}$$\begin{equation}\Label{#1}
\addtocounter{c}{1}}
\renewcommand{\]}{\ifnum \value{c}=1 $$\else \end{equation}\fi}
\begin{document}

\title{Meeting Solomon Marcus}
\author{Florin Felix NICHITA}
\address{Institute of Mathematics of the Romanian Academy, 
P.O. Box 1-764, RO-70700, Bucharest, Romania}
\date{}
%\keyword{racks; series; Euler's identity;
%Euler's relation; memories }
%\subjclass{00A05;  01A05; 11D09; 16B50; 30D05; 33B10}
\begin{abstract} { Dedicated to
Solomon Marcus, the current paper continues a recent series
about our meetings. Trying to recreate
the spirit of those meetings, we first propose a
discussion which started as  a high-school problem.
The main part of the current paper consists in
a section about racks.
It
 presents
elements of trigonometry  in racks, and Euler formulas
associated in this framework. }
\end{abstract}
\maketitle
%\keyword{racks; series; Euler's identity;
%Euler's relation; memories }
 
Keyword:{ racks; series; Euler's identity;
Euler's relation; memories }

MSC:{ 00A05;  01A05; 11D09; 16B50; 30D05; 33B10}

%\MSC{00A05;  01A05; 11D09; 16B50; 30D05; 33B10}
\section{Introduction }
 
I received an invitation from the journal
``Libertas Matematica 
(new series)'' to write an article
about Solomon Marcus (see, for example, \cite{MeSM, editor})
on the occasion of one hundred years
from his birth (March 1-st, 1925).
So, I thought about this invitation,
I recalled our meetings, I discussed with several people
about Solomon Marcus, and I decided that
it was the right moment to
publish some of my memories.
I
submitted my manuscript (\cite{Memo}), and it
 was 
accepted soon afterwards.
This determined me to write a second paper
dedicated to Solomon Marcus. Writing
about such a huge  personality  is not an 
easy task, but certainly a rewarding one,
especially when you recall some important moments from
our professional activity.
When my second paper (\cite{mMemo}) was still
in the process of being evaluated for publication
in ``Libertas Matematica 
(new series)'', I posted a survey of  \cite{Memo} and \cite{mMemo} 
on the 
Mathematical Archive (\cite{mMemo2}), adding
some new content. For example, I
gave applications of the B-ring Euler formula in
finding solutions for the braid condition.
This was well-received, and my e-mail correspondence
led me to the remark that Euler, Erdos and Euclid
are frequently mentioned in my discussions with Solomon Marcus. 
Also, there were  interesting interactions
with journals, colleagues etc.
From Brown University (Rhode Island), I received four
articles about Solomon Marcus (see \cite{s1,s2,s3,s4}).
Needless to say how much I valuated them.
I learned about: the life of Solomon Marcus in Providence, the
``junior'' Solomon Marcus meeting Al. Rosetti, his work at
``Poetica Mathematica'', the correspondence between Solomon
Marcus and Sanda Golopentia, the relationship between science and
literature, the contributions of Solomon Marcus to the literature
journals (Romania Literara, Viata Romaneasca, Dilema Veche etc) 
and so on.
These four articles have motivated me to write  
the fourth article in the series which starting with the
paper \cite{Memo}.
Also, I would like to
mention another Mathematical Archive communication (\cite{arxive}),
which was posted soon after our preprint (\cite{mMemo2}), and it gives some weight
to our results, because the solutions to the braid equation
lead to representations of the braid group. More recently,
Bradshaw and Vignat wrote a paper, in a similar manner
with ours, about another
``beautiful mind'' 
(see \cite{bv}).

This time, the paper in our series, starts
 at the mathematical level
with
 some discussions with high-school students
from the best institutions from Bucharest at some
informal meetings at 
{\em Math Cafe} in 2026.
We began our chat with a high-school problem. As we continued to develop
this discussion we were led to a very hard problem.
We were not sure if that problem has any solution at all,
but
it eventually led to a sophisticated
approach to unify the arithmetic and geometric series.
In the last part of the current paper 
we introduce
elements of trigonometry  in racks and obtain an Euler formula
in this context.
So, the mathematical
content is a continuation of \cite{mMemo2}. Further readings could
be the following: \cite{sm2,sm3,e,b2,bw,fn, tc}.

\bigskip

\section{Series and   Sequences}

\bigskip

This section is based on some discussions at {\em Math Cafe} in 2026.
We started our chat on a high-school problem. As we continued to develop
this discussion we were led to a very hard problem:
``Is it possible to unify the geometric and arithmetic sums,
their evaluating formulas and their methods of proof ?''
We were not sure that this problem has any solution,
but
it eventually led to a sophisticated
approach to unify the arithmetic and geometric series.
We present and prove three theorems,  
we give some examples,
and then we give
the general theorem.

\bigskip

\begin{theorem}
 Let $  A \in {M}_2 (\mathbb{R}) $ with $ \ \det (A) = 1 $.
Then, $ \ \sum^3_{k=1} \  A^k = ( tr(A) + 1)  \ A^2 \ $.
\end{theorem}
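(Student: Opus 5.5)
The natural tool is the Cayley--Hamilton theorem for $2\times 2$ matrices. For any $A \in M_2(\mathbb{R})$ we have $A^2 - tr(A)\, A + \det(A)\, I_2 = 0$. Under the hypothesis $\det(A)=1$ this becomes
\[
A^2 + I_2 = tr(A)\, A .
\]
The plan is to rewrite the sum $A + A^2 + A^3$ so that this relation can be applied directly. Multiplying it on the right by $A$ produces the same relation one degree higher.

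Factor the sum as $A + A^2 + A^3 = A^2 + (A + A^3) = A^2 + A\,(I_2 + A^2)$. Substituting $I_2 + A^2 = tr(A)\, A$ gives
\[
A + A^2 + A^3 = A^2 + tr(A)\, A^2 = (tr(A)+1)\, A^2 ,
\]
which is exactly the claimed identity. Equivalently, one can multiply the Cayley--Hamilton relation by $A$ to get $A^3 + A = tr(A)\, A^2$, and then add $A^2$ to both sides.

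There is essentially no obstacle. The only point needing care is to invoke Cayley--Hamilton in its explicit $2\times 2$ form, with the determinant as the constant term, and to note that all matrices involved are polynomials in $A$ and hence commute, so the factorization is legitimate. If one prefers to avoid citing Cayley--Hamilton, the relation $A^2 - tr(A)A + \det(A) I_2 = 0$ can be checked by a direct entrywise computation with $A = \begin{pmatrix} a & b \\ c & d\end{pmatrix}$; this verification is routine.
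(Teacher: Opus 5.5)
Your proposal is correct and matches the paper's proof: both use the Cayley--Hamilton relation $A^2 - tr(A)A + I_2 = O_2$, group the sum as $(A+A^3)+A^2$, and rewrite $A+A^3 = tr(A)\,A^2$. Your extra remarks on commutativity and on checking the $2\times 2$ relation entrywise are harmless additions that the paper leaves implicit.
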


{\bf Proof.} We use the formula $ A^2 - tr(A) A + I_2 = O_2 \ $.
So,
$ \ \sum^3_{k=1} \  A^k = (A + A^3) + A^2 =
tr(A) A^2 + A^2 = ( tr(A) + 1)  \ A^2 \ $.
\qed

\bigskip

\begin{theorem}
 Let $ \ A \in {M}_2 (\mathbb{R}) $ such that $ \ \det (A) = 1 $.

Then, $ \ \sum^9_{k=1} \  A^k = ( tr(A) + 1) ( tr(A^3) + 1) \ A^5 \ $.
\end{theorem}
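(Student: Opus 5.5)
We group the nine powers into three blocks of three consecutive powers and apply the previous theorem twice: first to $A$ itself, then to $B=A^3$. Since $\det(A^3)=\det(A)^3=1$, the previous theorem applies to $B$ as well.

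The first step rewrites the sum as
\[
\sum_{k=1}^9 A^k = (A+A^2+A^3)(I_2 + A^3 + A^6) = \Bigl(\sum_{k=1}^3 A^k\Bigr)\bigl(I_2 + B + B^2\bigr).
\]
This factorization holds because all the matrices involved are powers of $A$ and therefore commute. By the previous theorem, the first factor equals $(tr(A)+1)A^2$.

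The second step treats the factor $I_2+B+B^2$. The previous theorem gives $B+B^2+B^3=(tr(B)+1)B^2$. Since $B$ is invertible, we may multiply this identity by $B^{-1}$, which yields
\[
I_2+B+B^2=(tr(B)+1)B = (tr(A^3)+1)A^3 .
\]
Alternatively, one can apply the Cayley--Hamilton relation $B^2+I_2=tr(B)B$ directly, exactly as in the proof of the previous theorem, since $\det(B)=1$.

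The final step multiplies the two factors. The product is $(tr(A)+1)(tr(A^3)+1)A^2A^3=(tr(A)+1)(tr(A^3)+1)A^5$, which is the claimed identity. The only point requiring care is making sure the Cayley--Hamilton identity is applied to $A^3$ rather than to $A$, and this rests on $\det(A^3)=1$. Beyond that, the argument is routine commutative bookkeeping, and no real obstacle is expected.
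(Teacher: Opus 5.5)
Your proof is correct and is essentially the paper's argument: the paper also splits the sum into the three blocks $A^{3j+1}+A^{3j+2}+A^{3j+3}=(tr(A)+1)A^{3j+2}$ and then collapses $A^2+A^5+A^8$ to $(tr(A^3)+1)A^5$, which is your factorization $(A+A^2+A^3)(I_2+A^3+A^6)$ followed by Cayley--Hamilton for $A^3$. The only difference is that you state explicitly the relation $I_2+A^6=tr(A^3)A^3$ and its justification $\det(A^3)=1$, which the paper uses without comment in its last equality.
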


{\bf Proof.} We use the formula $ A^2 - tr(A) A + I_2 = O_2 \ $.
So,
$ \ \sum^9_{k=1} \  A^k = (A + A^3) + A^2 +
(A^4 + A^6) + A^5 + (A^6 + A^8) + A^7 = 
( tr(A) + 1)  \ A^2 + ( tr(A) + 1)  \ A^5 +
 ( tr(A) + 1) \ A^8 = 
 ( tr(A) + 1) ( tr(A^3) + 1) \ A^5 \ $.
\qed

\bigskip

\begin{theorem}
 Let $ \ A \in {M}_2 (\mathbb{R}) $ such that $ \ \det (A) = 1 $.

Then, $ \ \sum^{27}_{k=1} \  A^k = ( tr(A) + 1) ( tr(A^3) + 1) 
 ( tr(A^9) + 1) 
\ A^{14} \ $.
\end{theorem}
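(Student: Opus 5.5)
The plan is to reduce the sum of $27$ terms to the sum of $9$ terms, using the same grouping idea as in the previous two theorems. I split the sum into three consecutive blocks of nine powers:
\[
\sum_{k=1}^{27} A^k = \sum_{k=1}^{9} A^k + A^9\sum_{k=1}^{9} A^k + A^{18}\sum_{k=1}^{9} A^k .
\]
By the previous theorem, $\sum_{k=1}^{9}A^k=(tr(A)+1)(tr(A^3)+1)A^5$. Substituting and factoring, the sum becomes
\[
(tr(A)+1)(tr(A^3)+1)\,\bigl(A^5+A^{14}+A^{23}\bigr).
\]

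It then remains to show that $A^5+A^{14}+A^{23}=(tr(A^9)+1)A^{14}$, that is, $A^5+A^{23}=tr(A^9)A^{14}$. Set $B=A^9$. Since $\det(B)=\det(A)^9=1$ and $B\in M_2(\mathbb{R})$, the Cayley--Hamilton theorem gives $B^2-tr(B)B+I_2=O_2$. Multiplying this on the left by $A^5$, which commutes with $B$, yields
\[
A^{23}+A^5=tr(A^9)\,A^{14}.
\]
This is exactly what is needed, and substituting it gives the claimed formula.

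The only point needing care is that the Cayley--Hamilton relation is applied to $A^9$ rather than to $A$. This is legitimate because the determinant condition passes to powers and all the matrices involved are powers of $A$, hence commute. The same step is implicitly used for $A^3$ in the previous theorem. The pattern clearly generalizes by induction to $\sum_{k=1}^{3^n}A^k=\prod_{j=0}^{n-1}(tr(A^{3^j})+1)\,A^{(3^n+1)/2}$, which is presumably the general theorem announced.
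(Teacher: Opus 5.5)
Your proof is correct and is essentially the paper's argument. The paper groups the $27$ powers into consecutive triples $(A^{3j+1}+A^{3j+3})+A^{3j+2}=(tr(A)+1)A^{3j+2}$ and then regroups; on each block of nine this is exactly the computation from the previous theorem, so both routes reach $(tr(A)+1)(tr(A^3)+1)(A^5+A^{14}+A^{23})$ and finish with Cayley--Hamilton for $A^9$. Citing the nine-term theorem on the blocks $A^{9i}\sum_{k=1}^{9}A^k$ ($i=0,1,2$), and checking $\det(A^9)=1$ explicitly, is a cleaner packaging of the same computation, which the paper writes with ellipses. It also makes the inductive step $S_{3m}=(tr(A^m)+1)A^m S_m$ for the general theorem transparent.
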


{\bf Proof.} 
$ \ \sum^{27}_{k=1} \  A^k = (A + A^3) + A^2 +
(A^4 + A^6) + A^5 + (A^6 + A^8) + A^7 + . . . +
(A^{25} + A^{27}) + A^{26}
= ( tr(A) + 1) ( tr(A^3) + 1) 
 ( tr(A^9) + 1) 
\ A^{14} \ $.
\qed

\bigskip

{ \em The following examples
are an illustration for the unification of 
the sum of non-zero digits and a corresponding 
geometric series.}

\begin{equation} \label{rmatcon2}
A= \begin{pmatrix}
1 & 1 \\
0 & 1 \\
\end{pmatrix}
\end{equation}

\begin{equation} \label{rmatcon2}
B= \begin{pmatrix}
2 & 0 \\
0 & \frac{1}{2} \\
\end{pmatrix}
\end{equation}

\bigskip

{ \em The following examples
shows how important is to use our formula.}

\begin{equation} \label{rmatcon2}
Let \ A = \begin{pmatrix}
1 & -2 \\
-1 & 3 \\
\end{pmatrix}
\end{equation}

Then,

\begin{equation} \label{rmatcon2}
\sum^9_{k=1}   A^k =  \ 265 \begin{pmatrix}
153 & -418 \\
-208 & 571 \\
\end{pmatrix}
\end{equation}

\bigskip

\begin{theorem}
 Let $ \ A \in {M}_2 (\mathbb{R}) $ such that $ \ \det (A) = 1 $.

Then, $ \ \sum^{3^n}_{k=1} \  A^k = 
( tr(A) + 1) ( tr(A^3) + 1) ... ( tr(A^{3^{n-1}}) + 1)
\ A^{\frac{3^n + 1}{2}} \ $.
\end{theorem}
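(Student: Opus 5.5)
Induction on $n$ is the natural route, with the first theorem of this section (the case $n=1$) as the base case. The key identity behind everything is the Cayley--Hamilton relation $A^2 - tr(A)A + I_2 = O_2$ used in the earlier proofs. Since $\det(A^m)=1$ for every $m$, it holds for every power: $B^2 - tr(B)B + I_2 = O_2$ whenever $B = A^m$. Multiplying by $B^{j}$ gives $B^{j} + B^{j+2} = tr(B)\,B^{j+1}$. Hence
\[
B^{j} + B^{j+1} + B^{j+2} = (tr(B)+1)\,B^{j+1},
\]
that is, a block of three consecutive powers of $B$ equals $(tr(B)+1)$ times the middle one. For the induction this lemma is applied with $B = A^{3^{n-1}}$.

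For the inductive step, assume the formula for $n-1$. Split the range $1\le k\le 3^n$ into three consecutive blocks of length $N=3^{n-1}$, writing $k = r + jN$ with $1\le r\le N$ and $j=0,1,2$. Then
\[
\sum_{k=1}^{3^n} A^k = \Big(\sum_{r=1}^{N} A^r\Big)\big(I_2 + A^{N} + A^{2N}\big).
\]
Everything here is a polynomial in $A$, so all the factors commute. Applying the lemma with $B=A^N$ and $j=0$ gives $I_2 + A^N + A^{2N} = (tr(A^{3^{n-1}})+1)A^{3^{n-1}}$. Applying the induction hypothesis gives $\sum_{r=1}^N A^r = \prod_{i=0}^{n-2}(tr(A^{3^i})+1)\,A^{(3^{n-1}+1)/2}$.

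Multiplying the two, the exponent becomes
\[
\frac{3^{n-1}+1}{2} + 3^{n-1} = \frac{3^n+1}{2},
\]
which is what the statement requires. The product of the trace factors gains the term $(tr(A^{3^{n-1}})+1)$ exactly as needed.

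There is no real obstacle. The only points needing care are:
\begin{itemize}
\item the factorization must be grouped as above, with the inner sum over $r$ and the outer factor $I_2 + A^N + A^{2N}$, rather than grouping by triples of consecutive powers as in the displayed proofs of the earlier theorems;
\item the Cayley--Hamilton identity must be justified for $A^{3^{n-1}}$, which follows since its determinant is $1$;
\item the exponent bookkeeping must be checked. As a sanity check, the case $n=2$ reproduces the second theorem of this section.
\end{itemize}
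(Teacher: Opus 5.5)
Your proof is correct, but it groups the sum the opposite way from the paper. The paper groups consecutive triples, $(A^{3m+1}+A^{3m+3})+A^{3m+2}=(tr(A)+1)A^{3m+2}$. It therefore extracts the \emph{first} factor $(tr(A)+1)$ and is left with $(tr(A)+1)A^2\sum_{m=0}^{3^{n-1}-1}(A^3)^m$. The remaining factors come from repeating this step on $A^3, A^9, \dots$, which the paper only indicates with dots. Made rigorous, this is an induction applied to the matrix $A^3$, which is legitimate since $\det(A^3)=1$, with exponent check $2+3\cdot\frac{3^{n-1}-1}{2}=\frac{3^n+1}{2}$. You instead split into three blocks of length $3^{n-1}$ and extract the \emph{last} factor $(tr(A^{3^{n-1}})+1)$ while keeping $A$ fixed. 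The two proofs are the two ways of reading $\sum_{k=0}^{3^n-1}x^k=\prod_{i=0}^{n-1}(1+x^{3^i}+x^{2\cdot 3^i})$: lowest base-3 digit first versus highest first. Your route gives a clean induction in which the hypothesis is literally the statement for $n-1$, and the exponent bookkeeping is one line. The paper's route reuses verbatim the grouping of the explicit cases $n=1,2,3$. Your remark that the factorization \emph{must} be grouped your way is therefore overstated: the consecutive-triple grouping works just as well, provided the inductive hypothesis is applied to $A^3$ instead of $A$.
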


{\bf Proof.} 
$ \ \sum^{3^n}_{k=1} \  A^k =  (A + A^3) + A^2 +
(A^4 + A^6) + A^5 + (A^6 + A^8) + A^7 + . . . +
(A^{{3^n} -2} + A^{{3^n}}) + A^{{3^n} - 1}
= 
( tr(A) + 1) ( tr(A^3) + 1) ... ( tr(A^{3^{n-1}}) + 1)
\ A^{\frac{3^n + 1}{2}} \ $.
\qed

\bigskip

{ \em The following examples
are  illustrations for the above theorem.}

\begin{equation} \label{rmatcon2}
Let \ A= \begin{pmatrix}
1 & 1 \\
0 & 1 \\
\end{pmatrix}
\end{equation}

 $ \ \sum^{81}_{k=1} \  A^k = 
3 \times 3 \times 3 \times 3 \ \begin{pmatrix}
1 & 41 \\
0 & 1 \\
\end{pmatrix}
$
which corresponds to the result for the 
corresponding arithmetics series.

\bigskip

\begin{equation} \label{rmatcon2}
Let \ B= \begin{pmatrix}
2 & 0 \\
0 & \frac{1}{2} \\
\end{pmatrix}
\end{equation}

 Then $ \ \sum^{81}_{k=1} \  B^k = 
3.5 \times \frac{64+1+8}{8}  \times \frac{2^{18} +1+2^9}{2^9} 
\times \frac{2^{54}+1+2^{27}}{2^{27}} \ \begin{pmatrix}
2^{41} & 0 \\
0 & \frac{1}{2}^{41} \\
\end{pmatrix}
$
which corresponds to the result for the corresponding geometric series.

\bigskip

\section{Trigonometry  and the Euler formula in racks}
%Commentaries
%and Clarifications} 

%\bigskip

In this section,
we will introduce
elements of trigonometry  in racks (see \cite{racks}). 
The terminology might be slightly changed from the 
original theory. These adjustments will help us to
emphasize the new results. 
We will obtain 
 Euler formulas and identities
in this framework.

\subsection{Trigonometry   in racks}

\begin{definition}
 A rack is triple $ ( S, \ \cdot, \ \diamond \ )$, where
$ S $ is
a set with two binary operations,
satisfying the following  four axioms:

$ a \cdot (b \cdot c) = (a \cdot b) \cdot (a \cdot c) \ , $
$  \ \ \  \ \ \  \ \ \ \ \ \ \ \ \ \ \ \ (a \cdot b) \  \diamond  \  a = b \ ,$

$ 
a \cdot (b \ \diamond  \  a)  = b \ $,
$ \ \ \ \ \ \ \ \ \ \ 
 ( c \ \diamond \  b) \ \diamond  \ a = 
( c \ \diamond  \  a) \  \diamond  \  ( b \ \diamond  \  a) \ .$

The operation $ \  \cdot \ $ is called the main operation, we will
write $ a \cdot b = ab \ $, and it has priority over $ \ \diamond  \ $
 in formulas.
\end{definition}

 %\bigskip

\begin{remark}
 The following is an example of rack associated to a group:
 
$ \ \ ab = aba^{-1} $, $ \ \ a \ \diamond  \   b = b^{-1}ab $.
\end{remark}

\bigskip

We now choose $ \ \ \ \ e, \ O  \in S$, and let
$ \ \ \Pi = eO \ \ $ and 
$  \ U = e(eO) $.

Let $ \cos x = ex \ , \ \sin x = x \ \diamond  \  e \ \ $ be 
``trigonometric'' functions
in our rack.

%\bigskip

The following properties hold for our ``trigonometric'' functions:

$ \ \ \ \ \ \cos \Pi = U \ ; \ \ \ \ \ \ \ \sin \Pi = O \ $;

%\subsection{A little history} 

$ \cos \ xy =  \cos x \ \cos y \ ;
\ \ \ \ \ \ \ \ \ \ \ \ \ \ 
 \cos (x \ \diamond  \ y) =  \cos x \ \diamond  \  \cos y \ $;

$ \sin \ x   y = \sin x \ \sin y \ ;
\ \ \ \ \ \ \ \ \ \ \ \ \ \ 
\sin (x \ \diamond  \ y) = \sin x \ \diamond  \  \sin y \ $.

The fundamental formula for this ``trigonometry'' is the following:
$$  \sin \ ( \cos x ) \ = \cos ( \sin x  ) = x. $$ 

\bigskip

\begin{remark}
 The following rack can be defined now:
$ \ \ ab = \cos b $, $ \ \ a \diamond   b = \sin a $.
\end{remark}

\subsection{Weak racks}

\begin{definition}
 A { \bf weak rack} is triple $ ( S, \ \cdot, \ \diamond \ )$, where
$ S $ is
a set with two binary operations,
satisfying the following  three axioms:

$ \ \  \ a \cdot (b \cdot c) = (a \cdot b) \cdot (a \cdot c) \ , $

$  
\ \  (a \cdot b) \  \diamond  \  a = a \cdot (b \ \diamond  \  a) \ ,$

$ \ \ 
 ( c \ \diamond \  b) \ \diamond  \ a = 
( c \ \diamond  \  a) \  \diamond  \  ( b \ \diamond  \  a) \ .$

Again, the for the main operation 
 we will
write $ a \cdot b = ab \ $, and it has priority over $ \ \diamond  \ $
 in formulas.
\end{definition}

\begin{remark}
 The following weak rack can be associated to a Boolean algebra:
$$ \ \ ab = a \rightarrow b ,  \ \ a \diamond   b =  a \setminus b \ . $$

Also, the following weak rack can be associated:
$ \ \ ab = a \vee b ,  \ a \diamond   b =  a \wedge b  . $
\end{remark}

\bigskip

Let $  \ e, \ O  \in S$, 
$  \ \Pi = eO  $, 
$  \ U = e(eO) $,
$ \cos x = ex \ ,$ and $ \ \sin x = x \ \diamond  \  e $. We have
the following properties:
$ \ \ \ \ \ \cos \Pi = U \ ; \ \ \ \ \ \ \ \sin \Pi = O \ $;
$ \cos \ xy =  \cos x \ \cos y \ ; $

$ \cos (x \ \diamond  \ y) =  \cos x \ \diamond  \  \cos y \ $;
$ \ \ \sin \ x   y = \sin x \ \sin y \ ;
\ \ \ \ \ \ \ \ \ \ \ \ \ \ 
\sin (x \ \diamond  \ y) = \sin x \ \diamond  \  \sin y \ $.

The fundamental formula is the following:
$  \ \ \sin \ ( \cos x ) \ = \cos ( \sin x  ) . $

\subsection{The Euler formula and the Euler identity   in 
(weak) racks}

\begin{definition} We can define a dual rack
with the opposite operations,
$ ( S, \ * \ ,  \bullet \ )$,
where the new operations are the following:
$ \ \ \ \  a * b = b \ \diamond \  a \ $ and 
$ \ \ a \bullet b = b \cdot a \ $.

\end{definition}

\begin{remark}
 The rack having the following operations is self-dual:
$ \ \ ab = b $, $ \ \ a \diamond   b = a $.
\end{remark}

On the set $ \ S \times S $, we can put a rack structure
obtained
as the  product of the initial rack with its dual.
Let us denote its main operation as follows:\\
$  \ \ {(x, \ y) \ \square \ (u, \ v) = (xu, \ v \diamond y)
= (xu, \  y * v)} $.

%\bigskip

 For $ a \in S$, we  define 
an ``exponential'' function on $ \ S \times S $:\\
$ \ \  \exp_{a} (x, \ y) =
a^{(x, \ y)}  = (ax, \ a * y) $.

%\newpage

%\bigskip

The ``exponential'' map has the property:
$ \ a^{(x, \ y) \square   (u, \ v)} = 
\ a^{(x, \ y)} \ \square \  a^{(u, \ v)}
 $.

The diagonal map, $ \ \Delta : S \rightarrow S \times S, \
x \mapsto ( x, \ x) \ $,  is a rack morphism for a certain rack structure
on $ \ S \times S$.

\bigskip

\begin{theorem} ({\bf Euler formula in racks.})
The following formula holds:
\begin{equation} 
\exp_{e} \circ \ \Delta = 
[ \cos  \ \times \ \sin ] \circ \Delta \ .
\end{equation}
Equivalently,
\begin{equation} 
{e}^{(x,\ x)} = 
( \cos x ,  \  \sin x )  .
\end{equation}

Moreover, the following identity is true:
$ \ \ \ \ \ \  e^{\Delta (\Pi )} = ( U,\ O ) \ . $

\end{theorem}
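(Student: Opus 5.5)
The plan is to unwind the definitions; the theorem is essentially a bookkeeping identity. First I will state precisely what each side of the first formula means. For $x \in S$, the left-hand side is $(\exp_e \circ \Delta)(x) = \exp_e(x,x) = e^{(x,\ x)}$. By the definition of the exponential this equals $(ex,\ e * x)$. The right-hand side is $[\cos \times \sin](\Delta(x)) = [\cos \times \sin](x,x) = (\cos x,\ \sin x)$. This already shows that the two displayed formulas are equivalent: they are the same statement evaluated pointwise.

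Next I compare the two sides componentwise. In the first component, $ex = \cos x$ holds by the definition $\cos x = ex$. In the second component, the dual operation is $a * b = b \diamond a$, so $e * x = x \diamond e$, and $x \diamond e = \sin x$ by the definition $\sin x = x \diamond e$. Therefore $e^{(x,\ x)} = (\cos x,\ \sin x)$ for every $x\in S$. I will note that this step uses no rack axioms at all, so it holds verbatim in weak racks as well; this justifies the parenthetical ``(weak)'' in the subsection title.

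For the Euler identity, I substitute $x = \Pi$. This gives $e^{\Delta(\Pi)} = (\cos \Pi,\ \sin \Pi)$, and I then invoke the previously listed properties $\cos \Pi = U$ and $\sin \Pi = O$. For completeness I will recheck them. The first is $\cos \Pi = e(eO) = U$, which is immediate from the definitions. The second is $\sin \Pi = (eO)\diamond e = O$, which is the rack axiom $(a\cdot b)\diamond a = b$ with $a=e$ and $b=O$.

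The only point needing care is this last one. In a weak rack that axiom is not available, so $\sin\Pi = O$ can fail in general. I would therefore state the identity $e^{\Delta(\Pi)}=(U,O)$ for racks, where the axiom holds. For weak racks I would record only $e^{\Delta(\Pi)} = (U,\ (eO)\diamond e)$, unless the paper's listed property $\sin\Pi=O$ for weak racks is assumed.
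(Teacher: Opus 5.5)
Your proof is correct and is the same definition-unwinding argument the paper gives: both sides reduce to $(ex,\ e*x)=(\cos x,\ \sin x)$, and the identity follows from $e\Pi=U$ and $e*\Pi=(eO)\diamond e=O$. Your explicit use of the axiom $(a\cdot b)\diamond a=b$ for that last equality, which the paper leaves implicit, is right. Your caveat that it can fail in a weak rack is also sound: in the paper's own Boolean weak rack $ab=a\vee b$, $a\diamond b=a\wedge b$, one gets $\sin\Pi=(e\vee O)\wedge e=e$.
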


{ \bf Proof.} The left hand side reads:
$ \ \ \exp_{e} \circ \ \Delta (x) = e^{(x, \ x)} =
(ex, \ e*x) \ . $
The right hand side reads:
$ \ \ [ \cos  \ \times \ \sin ] \circ \Delta (x) 
= (\cos x, \ \sin x) \ . $
So, the left hand side equals the right hand side, because
$ \ \cos x = ex $ from the definition, and 
$ \ \sin x = x \diamond e = e*x \ $.

For the last identity we have:
$  e^{\Delta (\Pi )} =  e^{(\Pi , \ \Pi )} 
= ( e \Pi, \ e* \Pi) =  ( U,\ O )$.
\qed

\begin{remark}
The classical Euler's formula states that:
\begin{equation} \label{f}
  e^{ ix } = \cos x + i \sin x \ \ \ \ \ \forall x \in \mathbb{R}.
\end{equation} 

This formula could be also written as:
\begin{equation} 
\exp_{e} \circ \ j = 
[ \cos  \ \times \ \sin ] \circ \Delta \ ,
\end{equation}
where $ \ \ j : \mathbb{R} \rightarrow \mathbb{R} \times \mathbb{R} \ ,
\ \ x \mapsto (0, \ x) \ . $
For $x= \pi$ the Euler's formula becomes the Euler's identity. 
\end{remark}

\bigskip

We  consider the following ``hyperbolic'' functions:\\
$ \ \cosh (x, y) = (ex,  y) \ $ and
$ \ \ \sinh (x, \ y) = (x, e * y) $.

\bigskip

It is easy to check the hyperbolic functions Euler formula:
$$ \  \ \exp_e  \ = \ \cosh \ \circ \ \sinh \ = 
\ \sinh  \  \circ \ \cosh  \ . $$

\bigskip

\begin{theorem}
The functions $ \ \exp_e \ , \ \cosh $ and $ \ \sinh $ are
solutions for the Quantum Yang-Baxter equation 
($ R^{12} \circ  R^{13} \circ R^{23} \ = \  
R^{23} \circ  R^{13} \circ R^{12} \  $). 
\end{theorem}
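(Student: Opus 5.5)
The plan is to notice that all three maps $\exp_e$, $\cosh$ and $\sinh$ on $S\times S$ are \emph{product maps}, $R=f\times g$, with $f,g:S\to S$. Then the Quantum Yang--Baxter equation for such $R$ reduces to a single commutation condition $f\circ g=g\circ f$. For $\exp_e$ this condition is exactly the fundamental formula $\sin(\cos x)=\cos(\sin x)$ established earlier. Concretely, I take $f=\cos$, $f(x)=ex$, and $g=\sin$, $g(y)=y\diamond e=e*y$. Then
\[
\exp_e=\cos\times\sin,\qquad \cosh=\cos\times \mathrm{id},\qquad \sinh=\mathrm{id}\times\sin .
\]
Here $R^{12},R^{13},R^{23}$ are the maps on $S\times S\times S$ acting by $R$ on the indicated pair of coordinates and by the identity on the remaining one.

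First step: compute both sides of the QYBE for a general $R=f\times g$. For $R^{12}\circ R^{13}\circ R^{23}$ I will apply $R^{23}$ first, following the usual composition convention. This gives $(x,y,z)\mapsto(x,fy,gz)\mapsto(fx,fy,g^2z)\mapsto(f^2x,\,gfy,\,g^2z)$. Similarly, $R^{23}\circ R^{13}\circ R^{12}$ gives $(x,y,z)\mapsto(fx,gy,z)\mapsto(f^2x,gy,gz)\mapsto(f^2x,\,fgy,\,g^2z)$. So the two sides agree if and only if $g(f(y))=f(g(y))$ for all $y\in S$. Only the middle coordinate matters. The same conclusion holds if one reads the composition in the opposite order, since the computation is symmetric.

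Second step: verify the commutation in each case. For $\cosh$ and $\sinh$ one of the two factors is the identity, so $f\circ g=g\circ f$ holds trivially. For $\exp_e$ I need $(ey)\diamond e=e(y\diamond e)$. This is precisely the axiom $(a\cdot b)\diamond a=a\cdot(b\diamond a)$ with $a=e$, $b=y$. That axiom holds in a weak rack by definition. In a rack it also holds, because both sides equal $y$: $(ey)\diamond e=y$ and $e(y\diamond e)=y$ by the second and third rack axioms. This is the fundamental formula $\sin(\cos y)=\cos(\sin y)$ stated in the previous subsections.

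The only delicate point is the bookkeeping of what $R^{ij}$ means for a map that is not a flip-type operator, and fixing the composition order. Once $R$ is recognised as a product map, the rest is the one-line identity above. I expect no real obstacle beyond stating the conventions clearly.
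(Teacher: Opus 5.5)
Your proof is correct, and it takes a more general route than the paper's.

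The paper proves the statement only for $\exp_e$. It evaluates both composites on $(x,y,z)$ and silently uses the rack axioms $(ey)\diamond e=y$ and $e(y\diamond e)=y$ to collapse the middle coordinate, so that both sides become $(e(ex),\,y,\,e*(e*z))$. It then dismisses $\cosh$ and $\sinh$ as ``similar''.

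You instead isolate the lemma that a product map $f\times g$ satisfies the QYBE if and only if $f\circ g=g\circ f$. This settles all three maps at once and shows that the only input needed is the fundamental formula $\sin(\cos y)=\cos(\sin y)$.

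What this buys you is that the theorem holds verbatim in weak racks. There the middle coordinates $(ey)\diamond e$ and $e(y\diamond e)$ agree by the second weak-rack axiom but need not equal $y$. The paper's computation does not literally cover this case, even though the theorem sits in the subsection on (weak) racks.

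Your unsimplified bookkeeping, $(f^2x,\,gfy,\,g^2z)$ versus $(f^2x,\,fgy,\,g^2z)$, is also cleaner than the paper's. The paper's intermediate step $\exp_e^{13}(x,ey,e*z)=(ex,y,e*(e*z))$ drops the $e$ in the middle coordinate one step too early, although its final triple is right for racks.

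In return, the paper's direct route exhibits the common value explicitly in the rack case. There $\cos$ and $\sin$ are mutually inverse, so the middle coordinate is left unchanged.
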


{\bf Proof. $ \ \ $} We will only prove the theorem for the function
$ \ \exp_e \ $. For the other functions the computations are similar.

$ \ \  \exp_e ^{12} \circ   \exp_e ^{13} \circ  \exp_e ^{23} \ (x,y,z) = 
\ \exp_e ^{12} \circ   \exp_e ^{13} (x,ey,e * z) =\\
\ \exp_e ^{12}  (ex,y,e*(e * z)) = (e(ex), y,e*(e * z)) $;

$ \ \exp_e ^{23} \circ  \exp_e ^{13} \circ \exp_e ^{12} (x,y,z) =
\exp_e ^{23} \circ  \exp_e ^{13} (ex,e*y,z) =
\exp_e ^{23}  (e(ex),e*y,e * z) =\\
(e(ex), y,e*(e * z)) $.

\qed

\bigskip

The next theorem presents a Yang-Baxter system. From it,
one could associate a new solution for the 
Quantum Yang-Baxter Equation
(see \cite{ybs}).

\bigskip

\begin{theorem}
 Let $ W(x,y) = (x, \ xy) $ be the usual solution to 
the Quantum Yang-Baxter equation arising from a rack.
Then, 
$ \ \ \ \exp_e ^{23} \circ  \exp_e ^{13} \circ W^{12}  =
W^{12} \circ   \exp_e ^{13} \circ  \exp_e ^{23} $.

Since
$ \ \  \exp_e ^{12} \circ   \exp_e ^{13} \circ  \exp_e ^{23} =
\exp_e ^{23} \circ  \exp_e ^{13} \circ \exp_e ^{12} $, we
obtain a Yang-Baxter system.

\end{theorem}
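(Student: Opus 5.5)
The plan is to verify the identity directly on an arbitrary triple $(x,y,z)\in S^3$, in the same way as the proof of the previous theorem. I will use the conventions already in force there: $\exp_e(u,v)=(eu,\ e*v)=(eu,\ v\diamond e)$, and a superscript $ij$ means the map acts on the $i$-th and $j$-th components and leaves the remaining one unchanged. Likewise $W^{12}(x,y,z)=(x,\ xy,\ z)$.

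For the left-hand side, I apply the maps from right to left:
$$W^{12}(x,y,z)=(x,\ xy,\ z),\qquad \exp_e^{13}(x,\ xy,\ z)=(ex,\ xy,\ e*z),$$
$$\exp_e^{23}(ex,\ xy,\ e*z)=(ex,\ e(xy),\ e*(e*z)).$$
For the right-hand side:
$$\exp_e^{23}(x,y,z)=(x,\ ey,\ e*z),\qquad \exp_e^{13}(x,\ ey,\ e*z)=(ex,\ ey,\ e*(e*z)),$$
$$W^{12}(ex,\ ey,\ e*(e*z))=(ex,\ (ex)(ey),\ e*(e*z)).$$

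The first and third components agree trivially. The only real step is the middle component, where I need $e(xy)=(ex)(ey)$. This is exactly the first (left self-distributivity) axiom $a\cdot(b\cdot c)=(a\cdot b)\cdot(a\cdot c)$ with $a=e$, $b=x$, $c=y$. Only this axiom is used, so the argument works for weak racks as well.

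The second assertion, that $\exp_e$ satisfies the Quantum Yang--Baxter equation, is the previous theorem. Combining it with the identity just proved gives the Yang--Baxter system in the sense of \cite{ybs}. The main obstacle is only bookkeeping: keeping track of which component each $\exp_e^{ij}$ acts on, and remembering that the second slot of $\exp_e$ uses $*$ rather than the main operation.
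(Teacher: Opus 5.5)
Your proof is correct and essentially identical to the paper's. You evaluate both sides on $(x,y,z)$, obtaining $(ex,\ e(xy),\ e*(e*z))$ and $(ex,\ (ex)(ey),\ e*(e*z))$, and match the middle components by left self-distributivity with $a=e$; the Yang--Baxter system conclusion is then drawn from the preceding theorem on $\exp_e$, exactly as in the paper.
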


{\bf Proof. $ \  $} 
$   \exp_e ^{23} \circ  \exp_e ^{13} \circ W^{12}  (x,y,z) =
 \exp_e ^{23} \circ  \exp_e ^{13} (x,xy,z) =
\ \exp_e ^{23} (ex,xy,e*z) =  (ex, e(xy), e*(e*z)) $. 

$ \ \   W^{12} \circ   \exp_e ^{13} \circ  \exp_e ^{23}  (x,y,z) =
 W^{12} \circ   \exp_e ^{13} (x,ey,e*z) =
W^{12} (ex, ey, e*(e*z)) = (ex, (ex)(ey), e*(e*z)) $. 
But, $ \ e(xy) = (ex)(ey) \ $.
\qed
\bigskip
\begin{theorem}
 Let $ W(x,y) = (x, \ xy) $ be the usual solution to 
the Quantum Yang-Baxter equation arising from a rack.
 Let $ Z(x,y) = (x \diamond y, \ y) $ be another solution to 
the Quantum Yang-Baxter equation.
Then, $ \ \ W \ , \ exp_e \ , \ Z $ is a Yang-Baxter sysytem.

\end{theorem}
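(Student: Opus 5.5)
The plan is to verify directly the four defining identities of a Yang-Baxter system, as in \cite{ybs}, with $X=\exp_e$ in the middle position. For $(W,X,Z)$ on $S\times S\times S$ these identities are:
\begin{gather*}
W^{12}\circ W^{13}\circ W^{23}=W^{23}\circ W^{13}\circ W^{12},\qquad
Z^{12}\circ Z^{13}\circ Z^{23}=Z^{23}\circ Z^{13}\circ Z^{12},\\
W^{12}\circ X^{13}\circ X^{23}=X^{23}\circ X^{13}\circ W^{12},\qquad
Z^{23}\circ X^{13}\circ X^{12}=X^{12}\circ X^{13}\circ Z^{23}.
\end{gather*}
Each identity is checked by evaluating both sides on a triple $(x,y,z)$, in the same style as the proofs of the two preceding theorems. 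Only the rack axioms involving $\cdot$ and $\diamond$ separately are used, so the argument also works for weak racks.

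\textbf{Step 1 (the third identity).} This is exactly the mixed relation proved in the preceding theorem, so I will simply quote it.

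\textbf{Step 2 (QYBE for $W$ and $Z$).} For $W(x,y)=(x,xy)$ the two sides give
\[
(x,\ xy,\ x(yz)) \quad\text{and}\quad (x,\ xy,\ (xy)(xz)),
\]
which agree by the first axiom $a(bc)=(ab)(ac)$. For $Z(x,y)=(x\diamond y,\ y)$ the two sides give
\[
((x\diamond z)\diamond(y\diamond z),\ y\diamond z,\ z) \quad\text{and}\quad ((x\diamond y)\diamond z,\ y\diamond z,\ z),
\]
which agree by the right self-distributivity axiom $(c\diamond b)\diamond a=(c\diamond a)\diamond(b\diamond a)$.

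\textbf{Step 3 (the fourth identity).} This is the only genuinely new computation and the main point to get right. Recall that $\exp_e(x,y)=(ex,\ e*y)$ with $e*y=y\diamond e$, and that the superscripts must be placed correctly. The left side $Z^{23}\circ X^{13}\circ X^{12}$ sends $(x,y,z)$ successively to
\[
(ex,\ e*y,\ z),\qquad (e(ex),\ e*y,\ e*z),\qquad \bigl(e(ex),\ (e*y)\diamond(e*z),\ e*z\bigr).
\]
The right side $X^{12}\circ X^{13}\circ Z^{23}$ sends $(x,y,z)$ successively to
\[
(x,\ y\diamond z,\ z),\qquad (ex,\ y\diamond z,\ e*z),\qquad \bigl(e(ex),\ e*(y\diamond z),\ e*z\bigr).
\]
It therefore remains to show $(y\diamond z)\diamond e=(y\diamond e)\diamond(z\diamond e)$. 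This is again the fourth rack axiom, applied with $c=y$, $b=z$, $a=e$. This is the $\diamond$-analogue of the identity $e(xy)=(ex)(ey)$ used in the previous proof.

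Putting Steps 1--3 together gives all four relations, so $W,\ \exp_e,\ Z$ form a Yang-Baxter system. I expect the only real obstacle to be the bookkeeping of which factor each $R^{ij}$ acts on, and in which order. One must make sure the fourth relation is the one with $Z^{23}$, and not a variant with $Z^{12}$, since the latter would not reduce to a rack axiom.
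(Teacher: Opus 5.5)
The paper states this theorem without any proof, so there is no argument of its own to compare against. Your proof is correct and is the direct verification that the paper's two preceding proofs suggest. You check the four Yang--Baxter system conditions on triples and reuse the preceding theorem for $W^{12}\circ X^{13}\circ X^{23}=X^{23}\circ X^{13}\circ W^{12}$. You then reduce the only new relation, $Z^{23}\circ X^{13}\circ X^{12}=X^{12}\circ X^{13}\circ Z^{23}$, to $(y\diamond z)\diamond e=(y\diamond e)\diamond(z\diamond e)$, which is an instance of right self-distributivity. Your remark that only the two self-distributivity axioms are used, so the result also holds for weak racks, is right as well.

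One small correction concerns your closing aside, which is not needed for the proof. In a genuine rack the variant $Z^{12}\circ X^{13}\circ X^{23}=X^{23}\circ X^{13}\circ Z^{12}$ would also hold. It reduces to $e(x\diamond y)=(ex)\diamond(ey)$, and this follows from the rack axioms: each left multiplication $w\mapsto aw$ is invertible with inverse $w\mapsto w\diamond a$, and $w\mapsto ew$ preserves $\cdot$, hence also $\diamond$. This variant can fail only in weak racks, for example $ab=a\rightarrow b$, $a\diamond b=a\setminus b$ with $e=0$.
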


\end{document}